\newcommand{\CC}{\mathbb{C}}
\newcommand{\KK}{\mathbb{K}}
\newtheorem{theorem}{Theorem}[section]
\newtheorem{problem}[theorem]{Problem}
\theoremstyle{definition}
\newtheorem{definition}[theorem]{Definition}
\theoremstyle{remark}
\theoremstyle{proposition}
\theoremstyle{corollary}
\theoremstyle{conjecture*}
\newtheorem*{conjecture*}{Conjecture}
\numberwithin{equation}{section}
\begin{document}

\title[The Lie algebra of vector fields on $\KK^n$ is $2$-generated]{The Lie algebra of polynomial vector fields on the affine space is $2$-generated}


\author{Ivan Beldiev}
\address{HSE University, Faculty of Computer Science, Pokrovsky Boulevard 11, Moscow, 109028 Russia}
\email{ivbeldiev@gmail.com, isbeldiev@hse.ru}
\thanks{The study was implemented in the framework of the Basic Research Program at the HSE University.}

\subjclass[2020]{Primary 13N15, 17B66; Secondary 14R10, 17B65}

\keywords{Affine space, algebraic vector field, Lie algebra, generators, automorphisms}

\date{}

\dedicatory{}

\begin{abstract}
    We prove that the infinite-dimensional Lie algebra of polynomial vector fields on the affine space $\KK^n$ is generated by two explicitly given elements.
\end{abstract}

\maketitle

\section{Introduction}

We assume that the ground field $\KK$ is algebraically closed of characteristic zero. The Lie algebras and algebraic varieties in this note are defined over $\KK$.

Suppose that we have a Lie algebra $\mathfrak g$ and a subset $S \subseteq \mathfrak g$. By definition, the minimal (with respect to inclusion) Lie subalgebra of $\mathfrak g$ containing $S$ is called the Lie subalgebra generated by $S$. We denote this subalgebra by $\textnormal{Lie}(S)$. In other words, $\textnormal{Lie}(S)$ consists of all elements that can be obtained from the elements of $S$ by a finite composition of Lie brackets and linear combinations. We say that $\mathfrak g$ is generated by $S\subseteq \mathfrak g$ if $\textnormal{Lie}(S) = \mathfrak g$.

For a given Lie algebra $\mathfrak g$, a natural question to ask is what is the minimal number of elements that generate $\mathfrak g$. Let us say that $\mathfrak g$ is $k$-generated if $\mathfrak g$ can be generated by $k$ elements. Since the Lie bracket of any element with itself vanishes, $k$ cannot be equal to~$1$ unless $\mathfrak g$ has dimension $1$ or $0$ and the Lie bracket on $\mathfrak g$ is identically zero.

However, the question whether $\mathfrak g$ can be generated by two elements is non-trivial. For example, it was proved by Kuranishi that all finite-dimensional semisimple Lie algebras are $2$-generated (see \cite[Section 2]{Kur}). Moreover, it was proved later by Ionescu (see \cite{Ion}) that any finite-dimensional simple Lie algebra $\mathfrak g$ over $\mathbb{C}$ satisfies the so-called 1.5-generation property: for any non-zero $x\in \mathfrak g$ there is an element $y\in\mathfrak g$ such that $\mathfrak g$ is generated by $x,y$. In fact, the same result holds if we require $x,y$ to be nilpotent elements for the special linear algebra $\mathfrak{sl}(n,\KK)$ and the symplectic algebra $\mathfrak{sp}(2n, \KK)$ as was shown by Chistopolskaya (\cite{Chist, Chist2}). These and other related questions (in particular, for positive characteristic) were also studied in other works, see, for example, \cite{Bois, Levy, Rich}.

Another natural question is whether a given Lie algebra $\mathfrak g$ can be generated by a finite number of elements. Clearly, any finite-dimensional Lie algebra is finitely generated, so this question makes sense only for infinite-dimensional Lie algebras. An important class of such algebras is the Lie algebras of algebraic vector fields on affine varieties. For an affine algebraic variety $X$, this Lie algebra is denoted by $\textnormal{Der}(X)$ and consists of all $\KK$-derivations of the algebra $\KK[X]$ of regular functions on $X$, i.e., of all $\KK$-linear maps $D\colon \KK[X] \to \KK[X]$ such that $D(fg) = D(f)g + fD(g)$ for any $f,g\in\KK[X]$. In coordinate terms, if $X$ is embedded in the affine space $\mathbb{A}^n$ as a closed subvariety and $I\subseteq\KK[X]$ is the ideal of all polynomials vanishing on $X$, then any algebraic vector field $D\in\textnormal{Der}(X)$ can be written in the form
$$D = g_1\frac{\partial}{\partial z_1} + g_2\frac{\partial}{\partial z_2} + \ldots + g_n\frac{\partial}{\partial z_n},$$
where $z_1, z_2, \ldots, z_n$ are affine coordinates on $\mathbb{A}^n$ and $g_1, g_2, \ldots, g_n$ are regular functions on~$X$ such that the functions
$$g_1\frac{\partial f}{\partial z_1} + g_2\frac{\partial f}{\partial z_2} + \ldots + g_n\frac{\partial f}{\partial z_n}$$
vanish on $X$ for any $f\in I$ (see, for example, \cite[Proposition 3.1]{BFu}).

The problem of finite generation of these Lie algebras for concrete affine varieties is studied in recent works by Andrist for $\KK = \CC$, see \cite{Andr, Andr2, Andr3}. In \cite{Andr}, it is proved that the Lie algebra $\textnormal{Der}(\CC^n)$ is $3$-generated and three generators of $\textnormal{Der}(\CC^n)$ are given explicitly. The proof in fact works not only for the field of complex numbers but for any algebraically closed field of characteristic zero. In \cite{Andr2, Andr3}, the author considers other affine varieties, namely the smooth quadric $SL(2,\CC)$, the quadratic cone $\{(x,y,z)\in\CC^3\mid xy = z^2\}$ and non-singular Danielewski surfaces. It turns out that the Lie algebras of algebraic vector fields on these varieties are also finitely generated by a relatively small number of elements and the corresponding sets of generators are written explicitly. Moreover, the vector fields given in \cite{Andr, Andr2, Andr3} generating these Lie algebras are complete, i.e. their flow maps exist for all complex times. This allows to find a finite-number of one-parameter subgroups in the group of holomorphic automorphisms of these varieties that generate a subgroup acting infinitely transitively on them.

The aim of this note is to prove that the Lie algebra $\textnormal{Der}(\KK^n)$ is in fact $2$-generated. Namely, it is generated by the following two vector fields:

$$U = \frac{\partial}{\partial z_n},$$
$$V  = z_n^{4n}\frac{\partial}{\partial z_{n-1}} + (z_nz_{n-1})^{4n-4}\frac{\partial}{\partial z_{n-2}} + \ldots + (z_nz_{n-1}\ldots z_2)^{8}\frac{\partial}{\partial z_1} + (z_nz_{n-1}\ldots z_1)^4\frac{\partial}{\partial z_n}.$$

We prove it in Section 2 showing explicitly how to express any algebraic vector field on $\KK^n$ via $U$ and $V$ using Lie brackets and linear combinations.

In Section 3, we say a few words about complete vector fields and infinite transitivity. We show that our vector field $V$ is not complete, so the question whether $\textnormal{Der}(\CC^n)$ can be generated by two complete algebraic vector fields remains open.

\section{Acknowledgements}

The author is grateful to his academic supervisor Ivan Arzhantsev for posing the problem and useful discussions.

\section{The main result}

Consider the Lie algebra $\textnormal{Der}(\KK^n)$ of algebraic (in other words, polynomial) vector fields on the affine space $\KK^n$ with coordinates $z_1, z_2, \ldots, z_n$. Explicitly,
$$\textnormal{Der}(\KK^n) = \left\{ f_1\frac{\partial}{\partial z_1} + f_2\frac{\partial}{\partial z_2} + \ldots + f_n\frac{\partial}{\partial z_n}\,\middle|\, f_1, f_2, \ldots, f_n\in\KK[z_1, z_2, \ldots, z_n] \right\},$$
and the Lie bracket is given by
$$\left[ f\frac{\partial}{\partial z_i}, g\frac{\partial}{\partial z_j}\right] = f\frac{\partial g}{\partial z_i}\frac{\partial}{\partial z_j} - g\frac{\partial f}{\partial z_j}\frac{\partial}{\partial z_i}$$
for any $f,g\in\KK[z_1,z_2,\ldots, z_n]$ and for any $1\leq i,j\leq n$.
The Lie algebra $\textnormal{Der}(\KK^n)$ is commonly denoted by $W_n$.

In this section, we prove that $W_n$ can be generated by two explicitly given elements. Our main result is the following theorem.

\begin{theorem}\label{main}
    The Lie algebra of polynomial vector fields on $\KK^n$ is generated by the following two elements:
$$ U = \frac{\partial}{\partial z_n},$$
$$V  = z_n^{4n}\frac{\partial}{\partial z_{n-1}} + (z_nz_{n-1})^{4n-4}\frac{\partial}{\partial z_{n-2}} + \ldots + (z_nz_{n-1}\ldots z_2)^{8}\frac{\partial}{\partial z_1} + (z_nz_{n-1}\ldots z_1)^4\frac{\partial}{\partial z_n}.$$
\end{theorem}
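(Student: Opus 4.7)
My plan is to proceed in three stages. In the first stage, I prove $\partial/\partial z_i \in \textnormal{Lie}(U,V)$ for all $i$. The case $i = n$ is trivial because $U$ is exactly this vector field. For $i < n$, I argue by downward induction on $i$: the operator $\textnormal{ad}_U$ acts on each $f\,\partial/\partial z_j$ by $(\partial f/\partial z_n)\,\partial/\partial z_j$, preserving the direction, and the $k$-th summand of $V$ has $z_n$-degree $4(n-k+1)$, strictly decreasing in $k$. Therefore $\textnormal{ad}_U^{4n}(V)$ kills every summand except the first and yields a nonzero scalar multiple of $\partial/\partial z_{n-1}$. To obtain $\partial/\partial z_i$ once $\partial/\partial z_{i+1},\ldots,\partial/\partial z_{n-1}$ are in hand, I compute $\textnormal{ad}_U^{4(i+1)}(V)$, whose surviving components lie in directions $\partial/\partial z_{n-1},\ldots,\partial/\partial z_i$, and apply appropriate powers of the already-available $\textnormal{ad}_{\partial/\partial z_k}$ for $k \in \{i+1,\ldots,n-1\}$ to annihilate the unwanted components and isolate the $\partial/\partial z_i$-part.

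In the second stage, I exploit that $\textnormal{Lie}(U,V)$ is now closed under differentiation of coefficients of any element. Applying products of $\textnormal{ad}_U$ and $\textnormal{ad}_{\partial/\partial z_k}$ to $V$ extracts from each summand every monomial vector field obtained by partial differentiation of its coefficient; in particular, from the $j$-th summand I obtain all $z_{n-j+1}^{b_{n-j+1}}\cdots z_n^{b_n}\,\partial/\partial z_{n-j}$ with $0 \le b_l \le 4(n-j+1)$, and from the last summand all $z_1^{a_1}\cdots z_n^{a_n}\,\partial/\partial z_n$ with $0 \le a_l \le 4$. Brackets such as $[z_k\,\partial/\partial z_n,\, z_n\,\partial/\partial z_i] = z_k\,\partial/\partial z_i - \delta_{ki}\, z_n\,\partial/\partial z_n$, combined with $z_n\,\partial/\partial z_n$ obtained from the last summand, then yield every linear vector field $z_k\,\partial/\partial z_i$, so the copy of $\mathfrak{gl}_n$ spanned by these lies in $\textnormal{Lie}(U,V)$.

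In the third stage, I induct on the total degree $|a| = a_1 + \cdots + a_n$ to show $z^a\,\partial/\partial z_i \in \textnormal{Lie}(U,V)$ for all $a$ and $i$, using the identity
\[[z^\alpha\,\partial/\partial z_j,\ z^\beta\,\partial/\partial z_i] = \beta_j\, z^{\alpha+\beta-e_j}\,\partial/\partial z_i - \alpha_i\, z^{\alpha+\beta-e_i}\,\partial/\partial z_j.\]
The main obstacle lies precisely here: the bracket of two monomials of degree $<m$ produces two monomials of degree $m$, not strictly less, so one must choose $\alpha,\beta$ cleverly either to kill the second summand (for instance by ensuring $\alpha_i = 0$, which works when $|a| \ge 2$ and $a$ has a nonzero entry outside position $i$) or to arrange that $z^{\alpha+\beta-e_i}\,\partial/\partial z_j$ is independently known to belong to the algebra. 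I expect to split into cases based on the support of $a$, treating pure powers $a = m\,e_i$ separately via relations like $[z_j\,\partial/\partial z_i,\, z_i^m\,\partial/\partial z_j] = m\, z_i^{m-1} z_j\,\partial/\partial z_j - z_i^m\,\partial/\partial z_i$, and handling low-degree base cases $|a| \le 2$ by direct brackets built from the monomials extracted in the second stage. The exponent $4$ in the last summand of $V$ appears to be chosen precisely so that enough degree-two base monomials are available to start the bootstrap.
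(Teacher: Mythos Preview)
Your Stages 1 and 2 match the paper's Steps 1--4 almost exactly: isolate $\partial/\partial z_i$ by exploiting the distinct $z_n$-degrees (and more generally the distinct variable supports) of the summands of $V$, then extract enough low-degree monomials to obtain all linear fields $z_k\,\partial/\partial z_i$. One small imprecision: applying products of $\textnormal{ad}_{\partial/\partial z_k}$ to $V$ yields a \emph{sum} over the surviving summands, not a single monomial, so you must re-use the isolation trick from Stage~1 (e.g.\ $\textnormal{ad}_{\partial/\partial z_{i+1}}^{4i+1}$ singles out the $\partial/\partial z_i$-component) before claiming individual monomials lie in $L$.

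Stage 3 is where you diverge from the paper and where a genuine gap appears. Your claim that taking $\alpha_i=0$ ``works when $a$ has a nonzero entry outside position $i$'' fails for $a=(m-1)e_i+e_k$ with $k\ne i$: every choice of $\alpha$ with $\alpha_i=0$ and $|\alpha|\ge 2$ forces $\beta_j=0$, so the bracket vanishes. Moreover, your pure-power identity $[z_j\,\partial/\partial z_i,\,z_i^m\,\partial/\partial z_j]$ uses the degree-$m$ input $z_i^m\,\partial/\partial z_j$, circular for a straight induction on $|a|$. Both issues are repairable---e.g.\ $[z_kz_i\,\partial/\partial z_i,\,z_i^{m-1}\,\partial/\partial z_i]=(m-2)\,z_i^{m-1}z_k\,\partial/\partial z_i$ handles the first, and one can order the degree-$m$ step so that non-pure-power monomials are proved first---but as written the case analysis is incomplete. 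The paper sidesteps this entirely: instead of inducting on total degree, it first obtains all one-variable fields via the recursion $[z_i^s\,\partial/\partial z_i,\,z_i^2\,\partial/\partial z_i]=(2-s)\,z_i^{s+1}\,\partial/\partial z_i$ (which needs $z_i^3\,\partial/\partial z_i$ as a seed, explaining the exponent~$4$ in $V$), then $z_i^s\,\partial/\partial z_j$ via $[z_i^s\,\partial/\partial z_i,\,z_i\,\partial/\partial z_j]$, and finally inducts on the \emph{number of variables} in the monomial using $[z_k^s\,\partial/\partial z_1,\,fz_1\,\partial/\partial z_1]=(p+1)\,fz_k^s\,\partial/\partial z_1$. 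This avoids all the awkward support cases.
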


\begin{proof} Denote by $L$ the Lie algebra generated by $U$ and $V$. We need to prove that $L$ coincides with $W_n$.
    \begin{enumerate}
        \item Let us start acting by $\textnormal{ad}_U$ on $V$ (in other words, this means applying $\frac{\partial}{\partial z_n}$ to each coefficient of~$V$). After acting $4n-3$ times, we obtain, up to a non-zero coefficient, the vector feld $z_n^3\frac{\partial}{\partial z_{n-1}}$. After we continue acting by $\textnormal{ad}_U$ on this vector field, we see that $z_n^s\frac{\partial}{\partial z_{n-1}} \in L$ for any $0 \leq s \leq 3$.
        \item Now, suppose by induction that $z_n^{s_n}z_{n-1}^{s_{n-1}}\ldots z_{i+1}^{s_{i+1}}\frac{\partial}{\partial z_{i}} \in L$ for any $i\geq k$ and for any $0 \leq s_{n}, s_{n-1}, \ldots, s_{i+1} \leq 3$. In particular, $\frac{\partial}{\partial z_{k}} \in L$. Let us start acting by $\textnormal{ad}_{\partial/\partial z_k}$ on~$V$. After we do this $4k - 3$ times, we obtain, up to a non-zero coefficient, the vector field $(z_nz_{n-1}\ldots z_{k+1})^{4k}z_k^3\frac{\partial}{\partial z_{k-1}}$. Acting by $\textnormal{ad}_{\partial/\partial z_j}$, $j\geq k$, on this vector field sufficiently many times, we obtain $$z_n^{s_n}z_{n-1}^{s_{n-1}}\ldots z_{k}^{s_k}\frac{\partial}{\partial z_{k-1}}$$ for any $0 \leq s_{n}, s_{n-1}, \ldots, s_{k} \leq 3$. In particular, $\frac{\partial}{\partial z_{k-1}} \in L$. At the $n$-th step of this procedure, we obtain $$z_n^{s_n}z_{n-1}^{s_{n-1}}\ldots z_{1}^{s_1}\frac{\partial}{\partial z_{n}}$$ for any $0 \leq s_{n}, s_{n-1}, \ldots, s_1 \leq 3$.
        \item For any $k < n$, we have
        $$\left[z_k^3\frac{\partial}{\partial z_n}, z_n\frac{\partial}{\partial z_k}\right] = z_k^3\frac{\partial}{\partial z_k} - 3z_k^2z_n\frac{\partial}{\partial z_n}.$$
        Since we already proved that $z_k^2z_n\frac{\partial}{\partial z_n} \in L$, this implies that $z_k^3\frac{\partial}{\partial z_k}\in L$. Acting by $\textnormal{ad}_{\partial/\partial z_k}$ on this vector field, we see that $z_k^s\frac{\partial}{\partial z_k} \in L$ for any $k$ and for any $0 \leq s\leq 3$ (for $k = n$, it was proved at the previous step).
        \item For any $i,j$ such that $i\ne j$, we have
        $$\left[z_i\frac{\partial}{\partial z_n}, z_n\frac{\partial}{\partial z_j}\right] = z_i\frac{\partial}{\partial z_j}.$$
        This proves that $z_i\frac{\partial}{\partial z_j}\in L$ for any $i, j$ from $1$ to $n$ (for $i = j$, it was proved at the previous step).
        \item For any $i$ and for any non-negative integer $s$, we have
        $$\left[z_i^s\frac{\partial}{\partial z_i}, z_i^2\frac{\partial}{\partial z_i}\right] = (2-s)z_i^{s + 1}\frac{\partial}{\partial z_i}.$$
        Since we already know that $z_i^s\frac{\partial}{\partial z_i}\in L$ for all $s\leq 3$ and since $2 - s \ne 0$ for $s\geq 3$, this implies by induction that $z_i^s\frac{\partial}{\partial z_i}\in L$ for all non-negative integers $s$.
        \item For any $i,j$ such that $i \ne j$ and for any non-negative integer $s$, we have
        $$\left[z_i^s\frac{\partial}{\partial z_i}, z_i\frac{\partial}{\partial z_j}\right] = z_i^s\frac{\partial}{\partial z_j}.$$
        This shows that $z_i^s\frac{\partial}{\partial z_j} \in L$ for any $i,j$ from $1$ to $n$ and for all positive integers $s$ (for $i = j$, it was proved at Step 5). 
        \item Now, let us prove that $f\frac{\partial}{\partial z_i}\in L$ for any $i$ and for any monomial $f$ in $z_1, z_2, \ldots, z_n$. To simplify the notation, we prove this in the case $i = 1$; for other values of $i$, the proof is the same. We show by induction on $k$ that $f\frac{\partial}{\partial z_1} \in L$ for any monomial $f$ containing only the variables $z_1, z_2, \ldots, z_k$ (maybe not all of them, i.e., some of these variables might have degree zero in $f$). For $k = 1$, it was proved at Step 5. To prove the induction step, suppose that $f\frac{\partial}{\partial z_1}\in L$ for any monomial $f$ depending only on the variables $z_1, z_2, \ldots, z_{k-1}$, where $2\leq k\leq n$. We have the following Lie bracket:
        $$\left[z_k^s\frac{\partial}{\partial z_1}, f\cdot z_1\frac{\partial}{\partial z_1}\right] = (p + 1)f\cdot z_k^s\frac{\partial}{\partial z_i},$$
        where $z_1$ is contained in $f$ with power $p$. This implies that $f\cdot z_k^s\frac{\partial}{\partial z_1}\in L$ for any monomial $f$ depending only on $z_1, z_2, \ldots, z_{k-1}$ and for any non-negative integer $s$. So, the proof of the induction step is finished.

        Since any element of $W_n$ is a linear combination of elements of the form $f\frac{\partial}{\partial z_i}$, where $f$ is a monomial in $z_1, z_2, \ldots, z_n$ and $1 \leq i \leq n$, the theorem is proved.
    \end{enumerate}
\end{proof}

However, the question whether the algebra $W_n$ is $1.5$-generated or not remains open.

\begin{problem}
    Is it true that for any nonzero $x\in W_n$ there exists $y\in W_n$ such that $x,y$ generate $W_n$ as a Lie algebra?
\end{problem}

\section{Complete vector fields}

In this section, suppose that $\KK$ is the field of complex numbers $\CC$. In this case, for any smooth affine algebraic variety $X$ and for any holomorphic (in particular, algebraic) vector field~$W$ on $X$, there is a classical notion of the flow map $\varphi_{W}$ of $W$ which we recall briefly. For $t\in \CC$ and $p\in X$, the value $\varphi_{W}(t,p) = \varphi_{W,t}(p)$ is defined as $\gamma(t)$, where $\gamma$ is the integral curve of $W$ (i.e., the map $\gamma\colon \Omega \to X$, where $\Omega\subseteq \CC$ is a neighbourhood of $0$ containing $t$, such that the tangent vector $\dot{\gamma}(s)$ is equal to the value of $W$ at the point $\gamma(s)$ for all $s\in \Omega$) with $\gamma(0) = p$. It follows from the existence and uniqueness theorem for ordinary differential equations that for any $p\in X$ the flow map $\varphi_{W,t}$ is defined for all sufficiently small values of~$t$. Moreover, the flow map is holomorphic and depends holomorphically on $t$.

There is the following notion of a complete vector field.

\begin{definition}
    An algebraic vector field on a complex affine algebraic variety $X$ is called \emph{complete} (or \emph{completely integrable}) if its flow map exists for all complex times. 
\end{definition}

For a complete vector field $W$ on $X$, the map $\varphi_{W,t}$ is a holomorphic automorphism of~$X$ for any $t\in\CC$. So, the flow map $\varphi_W$ defines a one-parameter subgroup in the group of holomorphic automorphisms of $X$. Note that the flow map of a complete vector field $W$ is not necessarily algebraic even if $W$ itself is algebraic. For example, the flow maps of $W_1 = z_1\frac{\partial}{\partial z_1}$ and $W_2 = z_1z_2\frac{\partial}{\partial z_1}$ on $\CC$ and $\CC^2$ are given, respectively, by the formulas
$$\varphi_{W_1,t}(z_1) = \textnormal{exp}(t)z_1,$$
$$\varphi_{W_2,t}(z_1,z_2) = (\textnormal{exp}(tz_2)z_1, z_2).$$
The first flow map does not depend algebraically on $t$ (although it defines an automorphism of $\CC$ for any fixed $t\in\CC$), while the second one is not an algebraic automorphism of $\CC^2$ even for a fixed $t\in \CC$.
 
However, if a vector field $W$ on $X$ is a locally nilpotent derivation (i.e., for any function $f\in\KK[X]$ there exists a positive integer $n$ such that $W^n(f) = 0$), then $W$ is complete and its flow map is algebraic and defines a $\mathbb G_a$-subgroup in the group of algebraic automorphisms of~$X$ (see \cite[Chapter 1, Section 5]{Fr}).

Complete vector fields are closely related to the notion of infinite transitivity.

\begin{definition}
    A subgroup $G$ in the group of holomorphic (or algebraic) automorphisms of a complex affine variety $X$ acts \emph{infinitely transitively} on $X$ if for all positive integers $m$ and for any two $m$-tuples $(p_1,p_2, \ldots, p_m)$, $(q_1, q_2, \ldots, q_m)$ of points of $X$ such that $p_i \ne p_j$ and $q_i \ne q_j$ for $i \ne j$ there is an element $g\in G$ such that $g(p_i) = q_i$ for all $i = 1,2\ldots, m$.
\end{definition}

The study of infinite transitivity has developed significantly in recent years and many interesting results were obtained (see, for example, \cite{Arzh, AFKKZ}).

It turns out that, under certain natural conditions on $X$, one can find a group generated by a finite number of one-parameter subgroups in the group of holomorphic automorphisms of $X$ acting infinitely transitively on $X$. Namely, if the Lie algebra of algebraic vector fields is generated by a finite number of algebraic vector fields, then the (holomorphic) flow maps of these vector fields generate a group acting infinitely transitively on $X$. In particular, since the Lie algebra of algebraic vector fields on $\CC^n$ is generated by three complete vector fields, an infinitely transitive action on $\CC^n$ can be obtained from three one-parameter subgroups (see \cite{Andr}). For more details, see \cite{Andr, Andr2, Andr3}. This is why it is interesting to find a finite set not of arbitrary  generators of $\textnormal{Lie}(X)$ but of complete ones.

Let us return to the vector fields $U, V$ from Theorem \ref{main}. It is easy to see that $U = \frac{\partial}{\partial z_n}$ is complete since it is a locally nilpotent derivation. Explicitly, its flow map is given by
$$\varphi_{U,t}(z_1, z_2, \ldots, z_n) = (z_1, z_2,\ldots, z_n + t).$$
However, let us show that the vector field $V$ from Section 2 is not complete for $n = 2$. We replace $z_1,z_2$ with $x,y$, respectively. To compute the flow map of $V = y^{8}\frac{\partial}{\partial x} + x^4y^4\frac{\partial}{\partial y}$, we need to solve the following system of differential equation:
$$\begin{cases}
    \dot{x} = y^{8},\\
    \dot{y} = x^4y^4.
\end{cases}$$
To avoid bulky notation, we compute the flow map only at the point $(x_0,y_0) = (1,1)$. The two equations imply that
$\frac{dy}{dx} = \frac{x^4}{y^4}.$
Using separation of variables, we have
$$y^4dy = x^4dx, \quad \frac{y^5}{5} = \frac{x^5}{5} + C_1,$$
where $C_1\in \CC$ is some constant.
Since the point $(x_0, y_0) = (1,1)$ should satisfy the last equation, we have $C_1 = 0$ and $y = x$. Using the first equation of the initial system, we have
$$\dot x = x^{8}, \quad \frac{dx}{dt} = x^{8}, \quad \frac{dx}{x^{8}} = dt, \quad -\frac{1}{7x^7} = t + C_2.$$
Assuming that $(x_0, y_0) = (1,1)$ corresponds to the flow time $t_0 = 0$, we have $C_2 = -\frac{1}{7}$, so
$$-\frac{1}{7x^7} = t - \frac{1}{7}.$$
Since the left hand side of the last equation is never zero, we see that the flow map is not defined for $t = \frac{1}{7}$. This means that the vector field $V$ is not complete. So, the following question remains open.

\begin{problem}
    Can the Lie algebra of algebraic vector fields on $\CC^n$ be generated by two complete vector fields?
\end{problem}







\end{document}